\newcommand{\R}{\mathbf{R}}
\newcommand{\Z}{\mathbf{Z}}
\newcommand{\tc}{{\sf {TC}}}
\newcommand{\kk}{{\mathbf k}}
\newcommand{\C}{{\mathbf C}}
\newcommand{\x}{\mathtt{x}}
\newcommand{\co}{\colon\thinspace}
\newcommand{\TC}{{\sf {TC}}}
\newcommand{\gen}{\mathfrak{genus}}
\newcommand{\bi}{\begin{itemize}}
\newcommand{\ei}{\end{itemize}}
\newcommand{\vv}{{\mathfrak v}}
\newtheorem{thm}{Theorem}
\newtheorem{theorem}{Theorem}
\newtheorem*{thm5.2}{Theorem 5.2}
\begin{document}
\keywords{Topological complexity, configuration spaces.}
\subjclass[2000]{Primary 55M99, 55R80; Secondary 68T40.}
\thanks{The research was supported by grants from the EPSRC and from the Royal Society.}

\title[Topological complexity]{Topological complexity of configuration
spaces}

\author[M. ~Farber and M. ~Grant]{Michael Farber and Mark Grant}
\address{Department of Mathematical Sciences, Durham University\\
South Road, Durham, DH1 3LE, UK}

\email{michael.farber@durham.ac.uk}
\email{mark.grant@durham.ac.uk}

\maketitle
\begin{abstract}
The topological complexity $\tc(X)$ is a homotopy invariant which reflects the complexity of the problem of constructing a motion planning algorithm in the space $X$, viewed as configuration space of a mechanical system. In this paper we complete the computation of the topological complexity of the configuration space of
$n$ distinct points in Euclidean $m$-space for all $m\ge 2$ and $n\ge 2$; the answer was previously known in the cases $m=2$ and $m$ odd. We
also give several useful general results concerning sharpness of upper bounds for the
topological complexity.
\end{abstract}

\section{Introduction}
The motion planning problem is a central theme of robotics \cite{La}. Given a mechanical
system $S$, a motion planning algorithm for $S$ is a function associating with any
pair of states $(A,B)$ of $S$ a continuous motion of the system starting at $A$ and
ending at $B$. If $X$ denotes the configuration space of the system, one considers the
path fibration
\begin{equation}\label{fibration}
\pi: PX \to X\times X, \quad \pi(\gamma) = (\gamma(0), \gamma(1)),
\end{equation}
where $PX=X^I$ is the space of all continuous paths $\gamma: I=[0,1]\to X$.
In these terms, a motion planning algorithm for $S$ is a section (not necessarily
continuous) of $\pi$.

The topological complexity of a topological space $X$, denoted $\tc(X)$, is defined to
be the genus, in the sense of Schwarz \cite{Sch}, of fibration (\ref{fibration}).
More explicitly, $\TC(X)$ is the minimal integer $k$ such that $X\times X$ admits a cover
by $k$ open subsets, on each of which there exists a continuous
local section of fibration (\ref{fibration}). One of the basic properties of $\tc(X)$ is its homotopy invariance \cite{Fa03}.
If $X$ is a Euclidean Neighbourhood Retract then the number $\tc(X)$ can be equivalently characterized (see \cite{Finv}, Proposition 4.2)
as the minimal integer $k$ such that
there exists a section $s: X \times X\to PX$ of (\ref{fibration}) and a decomposition
$$X\times X= G_1\cup \dots\cup G_k, \quad G_i\cap G_j =\emptyset, \quad i\not=j$$
where each $G_i$ is locally compact and such that the restriction $s|G_i: G_i \to PX$ is continuous for $i=1, \dots,k$.
A section $s$ as above can be viewed as a motion planning algorithm: given a pair of states $(A, B)\in X\times X$ the path $s(A, B)(t)$ represents a continuous motion of the system starting from $A$ and ending at $B$. The number $\tc(X)$ is a measure of the complexity of motion planning algorithms for a system whose configuration space is $X$.

The concept $\tc(X)$ was
introduced and studied in \cite{Fa03}, \cite{Fa04}.  We refer the reader to surveys \cite{Fa06}, \cite{Finv} for detailed treatment of the invariant $\tc(X)$.
Computation of $\tc(X)$ in various practically interesting examples has received much
recent interest, see for instance papers \cite{CP1}, \cite{CP2}, \cite{FG07},
\cite{Gon}, \cite{G}.

In this paper we study the topological complexity $\tc(F(\R^m, n))$ of the space of
configurations of $n$ distinct points in Euclidean $m$-space. Here $m, n \ge 2$, and
$$F(\R^m,n)=\left\{ (\x_1,\ldots ,\x_n)\in (\R^m)^{\times n};
 \x_i\neq \x_j \, \mbox{for} \, i\neq j \right\},$$
topologised as a subspace of the Cartesian power $(\R^m)^{\times n}$.
This space appears in robotics when one controls multiple objects simultaneously trying to avoid collisions between them.
 Our main result in this paper is the following.
\begin{theorem}\label{thm1}
One has
\begin{equation}
\TC(F(\R^m,n))=\left\{\begin{array}{ll}
2n-1 & \textrm{for all }m\textrm{ odd}, \\
2n-2 & \textrm{for all }m\textrm{ even.}
\end{array}\right.
\end{equation}
\end{theorem}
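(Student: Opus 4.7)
The plan splits into a lower bound and an upper bound, with the main difficulty concentrated in the even case.

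For the lower bound, I would use the cohomological obstruction $\TC(X)\ge 1+\mathrm{zcl}(X;\kk)$, where $\mathrm{zcl}$ is the cup-length in the ideal of zero-divisors of $H^*(X;\kk)\otimes H^*(X;\kk)$. F.~Cohen's presentation of $H^*(F(\R^m,n);\kk)$ gives generators $e_{ij}$ in degree $m-1$ modulo the Arnold relations. Working with the zero-divisors $\bar e_{ij}=1\otimes e_{ij}-e_{ij}\otimes 1$, I would exhibit a nonzero monomial of length $2n-2$ when $m$ is odd (exploiting $e_{ij}^2\ne 0$) and of length $2n-3$ when $m$ is even (where graded commutativity forces $e_{ij}^2=0$, losing one factor). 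This yields the claimed lower bounds $\TC\ge 2n-1$ and $\TC\ge 2n-2$; the odd case is essentially due to Farber--Yuzvinsky, and the even case should be produced by a careful adaptation of their $m=2$ argument to the new relation $e_{ij}^2=0$.

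For the upper bound, I would use the Fadell--Neuwirth fibration $p\co F(\R^m,n)\to F(\R^m,n-1)$ whose fibre is $\R^m$ minus $n-1$ points, homotopy equivalent to $\bigvee^{n-1}S^{m-1}$. Induction on $n$ combined with standard inequalities for sectional category of fibrations (the fibre is a wedge of spheres, whose TC is easy to bound) gives a uniform bound $\TC(F(\R^m,n))\le 2n-1$ valid in both parities. Together with the cohomological lower bound above, this already proves the theorem for $m$ odd.

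The main obstacle, and the heart of the paper, is improving the upper bound by one to $\TC(F(\R^m,n))\le 2n-2$ when $m$ is even and $m\ge 4$. The dimensional/inductive approach of the previous paragraph falls one short, so a genuine saving is required. My plan here is to invoke the general ``sharpness of upper bounds'' machinery promised in the abstract: criteria under which a dimensional upper bound for $\TC$ can be reduced by one, typically via the vanishing of an Euler-type obstruction class on a fibrewise level. For even $m$, the wedge summands $S^{m-1}$ of the Fadell--Neuwirth fibre are odd-dimensional, so their Euler characteristic vanishes and they admit nonvanishing sections of the relevant bundles; this structural fact should switch off precisely the obstruction blocking the extra saving. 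Pinpointing the correct obstruction class in the iterated Fadell--Neuwirth setting, and verifying its vanishing in a form compatible with the sharpness criterion, is the principal technical challenge.
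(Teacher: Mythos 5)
Your outline gets the easy parts right: the lower bounds $\TC\ge 2n-1$ ($m$ odd) and $\TC\ge 2n-2$ ($m$ even), and the upper bound $\TC\le 2n-1$, are indeed taken from Farber--Yuzvinsky and pose no new difficulty. (One small correction there: in the Fadell--Husseini presentation $e_{ij}^2=0$ holds for \emph{all} $m$; what distinguishes the parities is the square of the zero-divisor, $\bar e_{ij}^2=-2\, e_{ij}\times e_{ij}\neq 0$ when $m-1$ is even, while $\bar e_{ij}^2=0$ when $m-1$ is odd.)

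The genuine gap is exactly where you locate ``the principal technical challenge'': the improvement to $\TC(F(\R^m,n))\le 2n-2$ for even $m\ge 4$. Your proposed mechanism -- a fibrewise Euler-type obstruction for the iterated Fadell--Neuwirth fibrations, switched off because the fibre is a wedge of odd spheres -- is not substantiated and is not how the saving is obtained; the existence of sections or nonvanishing vector fields on the fibres does not by itself lower $\TC$ below the dimensional bound, and no sharpness criterion of that shape is available. The paper's actual route is different and entirely cohomological: since $F(\R^m,n)$ is $(m-2)$-connected and (for $m\ge 3$) homotopy equivalent to a complex of dimension $(m-1)(n-1)$ with $H_{m-1}$ free abelian, Theorem \ref{thm3}(d) gives an \emph{if and only if} criterion: $\TC=2n-1$ precisely when some product of $2(n-1)$ zero-divisors $\bar v_1\cup\dots\cup\bar v_{2(n-1)}$ with $v_i\in H^{m-1}(F(\R^m,n);\Z)$ is nonzero. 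For even $m$ the ring $H^{*(m-1)}(F(\R^m,n))$ is isomorphic, with matching signs because $m-1$ is odd, to $H^{*}(F(\C,n))$; so such a nonzero product would transfer to $2(n-1)$ degree-one classes on $F(\C,n)$ with nonzero product of zero-divisors, contradicting $\TC(F(\C,n))=2n-2$, which Farber--Yuzvinsky prove geometrically via $F(\C,n)\simeq X\times S^1$, $\dim X\le n-2$, and the product inequality. It is this combination -- the converse (``only if'') direction of the sharpness theorem plus the regrading isomorphism with the planar configuration space -- that your proposal is missing; without it, your plan stops exactly at the step that constitutes the theorem's new content.
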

The cases $m=2$ and $m\geq 3$ odd of Theorem \ref{thm1} were proven by Farber and
Yuzvinsky in \cite{FY04}, where it was conjectured that $\TC(F(\R^m,n))=2n-2$ for all even
$m$. Here we settle this conjecture in the affirmative. Note that the methods employed in \cite{FY04} are not applicable in the case when $m > 2$ is even. We therefore suggest an alternative approach based on sharp upper bounds for the topological complexity.

The plan of the paper is as follows. In the next section we state Theorems \ref{thm2} \and \ref{thm3} about sharp upper bounds; their proofs appear in section \S \ref{prfs}. The concluding section \S \ref{prf} contains the proof of Theorem \ref{thm1}.

\section{Sharp upper bounds for the topological complexity}\label{sharp}

Let $X$ be a CW-complex of finite dimension $\dim(X)=n\geq 1$.
We denote by $\Delta_X\subset X\times X$ the diagonal $\Delta_X=\left\{(\x, \x); \x \in X\right\}$.
Let $A$ be a
local system of coefficients on $X\times X$.
A cohomology class $$u\in H^*(X\times X;A)$$ is called {\it a zero-divisor} if
its restriction to the diagonal is trivial, i.e. $u|{\Delta_X}=0\in H^\ast(X; A|X)$. The importance of zero-divisors stems from the following fact (see \cite{Finv}, Corollary 4.40):

{\it If the cup-product of $k$ zero-divisors $u_i\in H^\ast(X\times X; A_i)$, where $i=1, \dots, k$, is nonzero then $\tc(X) >k$.}

Theorem \ref{thm2} below supplements the general dimensional upper bound of \cite{Fa03} by giving necessary and sufficient conditions for its sharpness.

\begin{theorem}\label{thm2}
For any $n$-dimensional cell complex $X$ one has
 \bi
\item[(a)] $\TC(X)\leq 2n+1$;
\item[(b)] $\TC(X)=2n+1$ if and only if there exists a local coefficient
 system $A$ on $X\times X$ and a zero-divisor $\xi\in
  H^1(X\times X;A)$ such that the $2n$-fold cup product
  $$\xi^{2n}=\xi\cup \dots\cup \xi \neq   0\in H^{2n}(X\times X; A^{2n})$$
  is nonzero. Here $A^{2n}$ denotes the tensor product of $2n$ copies
  $A\otimes \dots \otimes A$
  of $A$ (over $\Z$).
\ei
\end{theorem}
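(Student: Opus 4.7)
For part (a), I invoke Schwarz's classical dimensional upper bound \cite{Sch}, which states that $\secat(p)\le\dim(B)+1$ for any fibration over a CW complex $B$. Applied to the path fibration $\pi\colon PX\to X\times X$, whose base has dimension $2n$, this immediately gives $\TC(X)\le 2n+1$.

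The ``if'' half of (b) is immediate from the zero-divisor cup-length bound recalled just before the theorem (Corollary 4.40 of \cite{Finv}): if $\xi$ is a zero-divisor with $\xi^{2n}\ne 0$, then $\TC(X)>2n$, which together with (a) forces $\TC(X)=2n+1$.

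The substance of the theorem lies in the ``only if'' direction, and my strategy is to produce a universal zero-divisor of Berstein--Schwarz type for $\TC$. Writing $G:=\pi_1(X\times X)$, let $\mathfrak{J}$ denote the local coefficient system on $X\times X$ associated to the augmentation ideal $\ker(\epsilon\colon\Z[G]\to\Z)$, and let $\vv\in H^1(X\times X;\mathfrak{J})$ be the extension class of the short exact sequence $0\to\mathfrak{J}\to\Z[G]\to\Z\to 0$ of coefficient systems. One checks directly that $\vv|_{\Delta_X}=0$, so that $\vv$ is a zero-divisor. I then claim the universal property: $\TC(X)\ge k+1$ if and only if $\vv^k\ne 0\in H^k(X\times X;\mathfrak{J}^k)$. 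Once this is granted, the ``only if'' direction of (b) follows by specializing to $k=2n$ and taking $A:=\mathfrak{J}$, $\xi:=\vv$.

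The crux is therefore the non-trivial direction of the universal property, that $\TC(X)\ge k+1$ forces $\vv^k\ne 0$. I would establish it by means of Schwarz's fibered join construction: the $k$-th fibered join $p_k\colon E_k\to X\times X$ of $\pi$ with itself has fibre the $k$-fold join $\Omega X\ast\cdots\ast\Omega X$, which is $(k-1)$-connected, and by Schwarz's theorem $\TC(X)\ge k+1$ is equivalent to $p_k$ having no continuous section. In the dimensional range $k\le 2n=\dim(X\times X)$ the primary obstruction to sectioning $p_k$, living in $H^k(X\times X;\pi_{k-1}(\mathrm{fibre}))$, is the only obstruction, and all higher obstructions automatically vanish for dimensional reasons. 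The principal obstacle in executing the plan cleanly is the explicit identification of this primary obstruction with the $k$-fold cup power $\vv^k$, which requires showing that the natural isomorphism between $\pi_{k-1}$ of the join and $\mathfrak{J}^{\otimes k}$ as twisted coefficient systems carries the obstruction class to $\vv^k$. Once this identification is in place, specializing $k=2n$ yields the desired conclusion.
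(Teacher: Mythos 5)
Parts (a) and the ``if'' half of (b) are fine and coincide with the paper's treatment (the dimensional upper bound from \cite{Fa03} and the zero-divisor cup-length bound, Corollary 4.40 of \cite{Finv}). The ``only if'' half, however, contains a genuine error: the class you construct is the wrong one. Your $\vv$ is the Berstein--Schwarz class of $X\times X$, i.e.\ the extension class of $0\to I(G)\to\Z[G]\to\Z\to 0$ with $G=\pi_1(X\times X)$ acting by left multiplication. Its restriction to the diagonal is \emph{not} zero in general: for any map $f\colon Y\to Z$, the pullback of the Berstein--Schwarz class of $Z$ vanishes precisely when $f_\ast\pi_1(Y)$ is the trivial subgroup (the class is the connecting image of $1\in H^0(Y;\Z)$, and an $f_\ast\pi_1(Y)$-invariant element of $\Z[\pi_1 Z]$ of augmentation $1$ exists only for trivial image). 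Since $\Delta_\ast$ is injective, your $\vv$ fails to be a zero-divisor whenever $\pi_1(X)\neq 1$, so the whole construction collapses exactly in the cases the theorem is about. The class that actually works --- and the one the paper uses, quoting Theorem 7 of Costa--Farber \cite{CF} --- has coefficients the augmentation ideal $I\subset\Z[\pi_1(X)]$ (not of $\pi_1(X\times X)$), made into a $\Z[\pi_1(X)\times\pi_1(X)]$-module by $(g,h)\cdot a=gah^{-1}$, and is represented by the crossed homomorphism $(g,h)\mapsto gh^{-1}-1$, which vanishes identically on the diagonal. This module is exactly $\tilde H_0(\Omega X)$ as a local system on $X\times X$, which is what the obstruction-theoretic identification of the primary obstruction of $\pi\colon PX\to X\times X$ requires; your $\mathfrak{J}$ is not isomorphic to it.

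Two further points. Your intermediate claim that $\TC(X)\geq k+1$ iff $\vv^k\neq 0$ for \emph{all} $k\leq 2n$ is false, and the justification is wrong: the higher obstructions to sectioning the $k$-fold fibrewise join live in degrees $k+1,\dots,2n$ and vanish for dimensional reasons only when $k=2n$ (also, the fibre $(\Omega X)^{\ast k}$ is $(k-2)$-connected, not $(k-1)$-connected); for instance any simply connected non-contractible $X$ has $\vv=0$ but $\TC(X)\geq 2$. This does not hurt you, since only $k=2n$ is needed, but the step you yourself flag as the ``principal obstacle'' --- identifying the primary obstruction of the join with the $2n$-th cup power of the primary obstruction of $\pi$, together with the computation $\tilde H_0(\Omega X)\cong I(\pi_1 X)$ as local systems --- is precisely the substance of the result, and it is what the paper outsources to \cite{CF} (via Schwarz's theorem on primary obstructions of fibrewise joins). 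As it stands, your argument has both an incorrect key construction and an unproved key identification, so the ``only if'' direction is not established.
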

Next we state a similar sharp upper bound result for $(s-1)$-connected spaces $X$ where $s>1$.
We use the following notation.
 If $B$ is an abelian group and $v\in
H^r(X;B)$ is a cohomology class then the class
\begin{eqnarray}\label{bar}
\bar{v} =v\times 1 - 1\times v\in
H^r(X\times X;B)\end{eqnarray}
is a zero-divisor, where $1\in H^0(X;\Z)$ is the unit and $\times$ denotes the cohomological cross-product.

We say that
a finitely generated abelian group is {\it
  square-free} if it has no subgroups isomorphic to $\Z_{p^2}$, where
$p$ is a prime.

\begin{theorem}\label{thm3} Let $X$ be a $(s-1)$-connected $n$-dimensional finite cell complex where $s\ge 2$. Assume additionally that $2n=rs$ where $r$ is an integer.\footnote{This last assumption is automatically satisfied (with $r=n$) for $s=2$, i.e. when $X$ is simply connected.}
Then
\bi
\item[(a)] $\TC(X)\leq r+1$;
\item[(b)] $\TC(X)= r+1$ if and only if there exists a finitely generated abelian group $B$ and a cohomology class $v\in H^s(X;B)$
such that the $n$-fold cup-product of the corresponding zero-divisors (\ref{bar}) is nonzero
$${\bar v}^r=\bar v\cup \dots\cup \bar v \ne 0 \, \in H^{2n}(X\times X; B^r).$$
Here $B^r$ denotes the $r$-fold tensor power
    $B\otimes \dots \otimes B$;
\item[(c)] If $H_\ast(X;\Z)$ is square-free, then $\TC(X)=r+1$ if and
  only if there exists a field $\kk$
 and cohomology classes $v_1,\ldots,v_r\in H^s(X;\kk)$ such that
 $$\bar v_1\cup \dots \cup \bar v_r \ne 0 \in H^{2n}(X\times X; \kk);$$
\item[(d)] If $H_s(X;\Z)$ is free abelian, then $\TC(X)=r+1$ if and
  only if there exist classes $v_1,\ldots ,v_r\in H^s(X;\Z)$ such that
   $$\bar v_1\cup \dots \cup \bar v_r \ne 0 \in H^{2n}(X\times X; \Z).$$
\ei
\end{theorem}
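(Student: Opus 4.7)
The plan is to establish (a) from a standard connectivity upper bound, handle the ``if'' direction of (b) via the zero-divisor cup-length principle cited in the excerpt, and extract the ``only if'' direction of (b) from Schwarz-style obstruction theory applied to the path fibration; parts (c) and (d) will then follow from (b) by universal-coefficient manipulations.

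For (a), the cleanest route is through Lusternik--Schnirelmann category: one has $\TC(X) \le \cat(X \times X) \le 2\cat(X) - 1$, and by Ganea's dimension-connectivity bound $\cat(X) \le n/s + 1$ for an $(s-1)$-connected $n$-dimensional CW complex, hence $\TC(X) \le 2(n/s + 1) - 1 = 2n/s + 1 = r+1$ using $2n = rs$. The ``if'' direction of (b) is then immediate: $\bar v^r$ is a cup-product of $r$ zero-divisors, and the principle cited in the excerpt forces $\TC(X) > r$, i.e.\ $\TC(X) = r+1$.

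The heart of the argument is the ``only if'' direction of (b). I would recast $\TC(X) \le r$ as the existence of a section of the $r$-fold fibrewise join $\pi^{*r} \colon (PX)^{*r} \to X \times X$ of the path fibration. The fibre of $\pi^{*r}$ is $(\Omega X)^{*r}$; since $\Omega X$ is $(s-2)$-connected, the join connectivity formula shows $(\Omega X)^{*r}$ is $(2n-2)$-connected, and Hurewicz together with the K\"unneth formula for joins gives $\pi_{2n-1}((\Omega X)^{*r}) \cong \pi_s(X)^{\otimes r}$. Because $\dim(X \times X) = 2n$, standard obstruction theory ensures that the only obstruction to a section lies in $H^{2n}(X \times X;\pi_s(X)^{\otimes r})$. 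The principal technical step---and the main obstacle---is to identify this primary obstruction, up to sign, with $\bar \omega^r$, where $\omega \in H^s(X;\pi_s(X))$ is the canonical Hurewicz cohomology class (dual to $\mathrm{id}_{\pi_s(X)}$ under $H^s(X;\pi_s(X)) \cong \mathrm{Hom}(H_s(X;\Z),\pi_s(X))$). This identification proceeds via transgression in the path-loop fibration and a careful tracking of the Hurewicz image through successive fibrewise joins. Granted it, if $\bar v^r = 0$ for every $v$ over every $B$, then specialising to $v = \omega$ kills the obstruction and forces $\TC(X) \le r$; conversely, $\TC(X) = r+1$ forces $\bar \omega^r \ne 0$ and witnesses (b) with $v = \omega$ and $B = \pi_s(X)$.

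For (c) and (d), I would refine the universal class $\omega$ using the hypotheses on $H_\ast(X;\Z)$. Universal coefficients in this connectivity range give $H^s(X;B) \cong \mathrm{Hom}(H_s(X;\Z),B)$, so the non-vanishing of $\bar\omega^r$ transports faithfully to convenient coefficients. When $H_s(X;\Z)$ is free abelian, choosing a $\Z$-basis decomposes $\omega$ into integer-valued duals $v_1,\dots,v_m \in H^s(X;\Z)$; expanding $\bar\omega^r$ multilinearly then shows at least one product $\bar v_{i_1} \cup \cdots \cup \bar v_{i_r}$ must be nonzero in $H^{2n}(X \times X;\Z)$, yielding (d). For (c), the square-free hypothesis on $H_\ast(X;\Z)$ ensures that $\bar\omega^r$ is detected modulo some prime $p$ (or by $\mathbf{Q}$-coefficients) without loss to extension-problem ambiguity, and the same multilinear expansion with classes $v_i \in H^s(X;\kk)$ yields (c). In both refinements the essential input is that the K\"unneth decomposition of $H^\ast(X \times X;\cdot)$ cleanly isolates products of external classes from mixed terms, which is precisely where the coefficient hypotheses enter.
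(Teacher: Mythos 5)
Your overall architecture coincides with the paper's: (a) from a dimension--connectivity bound (your route via $\TC(X)\le\cat(X\times X)\le 2\cat(X)-1$ and the Ganea--Grossman bound $\cat(X)\le n/s+1$ is a valid alternative to the paper's direct citation of the bound $\TC(X)<\tfrac{2n+1}{s}+1$); the ``if'' directions from the zero-divisor cup-length lower bound; and the ``only if'' direction of (b) by obstruction theory for the $r$-fold fibrewise join, whose fibre $(\Omega X)^{\ast r}$ is $(2n-2)$-connected so that the primary obstruction $\theta_r\in H^{2n}(X\times X;B^{\otimes r})$, $B=H_s(X)\cong\pi_s(X)$, is the only obstruction. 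The genuine gap is exactly the step you flag and then defer (``Granted it''): the identification of $\theta_r$ with an $r$-fold cup power. This is not routine bookkeeping; it is the product theorem for the primary obstruction of a fibrewise join (Schwarz's Theorem 1, which the paper invokes: the primary obstruction of the $r$-fold fibrewise join of $\pi$ equals $\theta^r$, where $\theta\in H^s(X\times X;B)$ is the primary obstruction of $\pi$ itself). Your proposed substitute --- ``transgression in the path--loop fibration and careful tracking of the Hurewicz image through successive fibrewise joins'' --- is a plan, not an argument, and as written the proof of (b) is incomplete at its central point. Note also that you make the remaining work harder than necessary: one does not need to identify $\theta$ with the canonical Hurewicz class $\bar\omega$. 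Since $X$ is $(s-1)$-connected, the K\"unneth theorem in degree $s$ gives $\theta=v\times 1+1\times w$, and $\theta|_{\Delta_X}=0$ (the path fibration has a section over the diagonal) forces $\theta=\bar v$; this already yields both implications in (b), so the transgression analysis you describe as the ``main obstacle'' can be avoided entirely once the Schwarz product formula is in hand.

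Parts (c) and (d) follow the paper's coefficient-decomposition scheme, and (d) as you describe it is fine. For (c), however, your phrase ``detected modulo some prime $p$ (or by $\mathbf{Q}$-coefficients) without loss to extension-problem ambiguity'' glosses the one place where square-freeness is actually used: after projecting $\bar v^r$ to a nonzero component in $H^{2n}(X\times X;B_{i_1}\otimes\cdots\otimes B_{i_r})$, the delicate case is when this tensor product is $\Z$, and one must argue that the nonzero integral class is \emph{not divisible} by some prime $p$ (which follows because $H^{2n}(X\times X;\Z)$ is itself square-free, a consequence of the hypothesis via K\"unneth and universal coefficients), so that its mod $p$ reduction survives; rational coefficients alone do not suffice, since the integral product may be torsion. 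Spelling out this divisibility argument, and either proving or explicitly citing the Schwarz product theorem for $\theta_r$, would close the gaps; with those repairs your proof is essentially the paper's.
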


\section{Proofs of Theorems \ref{thm2} and \ref{thm3}}\label{prfs}

\begin{proof}[Proof of Theorem \ref{thm2}] The first statement follows from \cite{Fa03}, Theorem 4.
If there exists a local system coefficient system $A$ and a zero-divisor $\xi\in H^1(X\times X; A)$ such that $\xi^{2n}\not=0$ then $\tc(X) \ge 2n+1$, by
Corollary 4.40 of \cite{Finv}. The remaining part of Theorem \ref{thm2} was proven in \cite{CF}, Theorem 7. More precisely, let $G=\pi_1(X, x_0)$ denote the fundamental group of $X$ and let $I\subset \Z[G]$ denote the augmentation ideal. $I$ can be viewed as a left $\Z[G\times G]$-module via the action
$$(g, h)\cdot \sum n_ig_i= \sum gg_ih^{-1},$$ where $g,h, \in G$ and $\sum n_ig_i\in I$; this defines a local system with stem $I$ on $X\times X$, see \cite{Wh}, chapter 6. A crossed homomorphism $f: G\times G\to I$ given by the formula $$f(g, h)=gh^{-1} -1, \quad g, h \in G$$ determines a cohomology class $\vv\in H^1(X\times X;I)$. This class is a zero-divisor and has the property that $\vv^{2n}\not=0$ assuming that $\tc(X)=2n+1$ according to Theorem 7 from \cite{CF}.
\end{proof}

\begin{proof}[Proof of Theorem \ref{thm3}]
Statement (a) follows directly from Theorem 5.2 of \cite{Fa04} which states that
\begin{equation}
\TC(X)<\frac{2n+1}{s}+1.
\end{equation}
for any $(s-1)$-connected CW-complex  $X$ of dimension $n$.

(b) One part of statement (b) follows from Corollary 4.40 of \cite{Finv}; indeed if $\bar v^{r}\not=0$ then $\tc(X)\ge r+1$ since each $\bar v$ is a zero-divisor.

The proof of the remaining part of statement (b) is derived from obstruction theory and results of
A.\ S.\ Schwarz \cite{Sch} centered around the notion of genus of a fibration.
We assume that $X$ is $(s-1)$-connected, $s\ge 2$, and $n$-dimensional and $2n=rs$ where $r$ is an integer. The case $n=1$ is trivial, therefore we will assume that $n\ge 2$. We want to show that $\tc(X)=r+1$ implies that $\bar v^r\not=0 \in H^{2n}(X\times X; B^r)$ for some class $v\in H^s(X;B)$.

Recall that $\tc(X)$ is defined as the genus of the path fibration (\ref{fibration}) and according to Theorem 3 from \cite{Sch} one has $\tc(X)\le r$ if and only if the $r$-fold fiberwise join
\begin{eqnarray}\label{fibrationr}
\pi_r: P_rX\to X\times X
\end{eqnarray}
of the original fibration $\pi: PX\to X\times X$ admits a continuous section. Hence our assumption $\tc(X)=r+1$ implies that
$\pi_r$ has no continuous sections.
The fibre $F_r$ of (\ref{fibrationr}) is the $r$-fold join
\begin{eqnarray}\label{join}F_r \, = \, \Omega X\ast \Omega X\ast \dots\ast \Omega X\end{eqnarray}
where $\Omega X$ denotes the space of loops in $X$ starting and ending at the base point $x_0\in X$.
Note that $\Omega X$ is $(s-2)$-connected and therefore the fibre $F_r$ is $(2n-2)$-connected since\footnote{One knows that the join a $p$-connected complex and a $q$-connected complex is $(p+q+2)$-connected.}
$r(s-2)+2(r-1)=  2n-2.$

The primary obstruction to the existence of a section of (\ref{fibrationr}) is an element $\theta_r \in H^{2n}(X\times X; \pi_{2n-1}(F_r))$.
It is in fact the only obstruction since the higher obstructions land in zero groups. Thus we obtain that $\theta_r\not=0$.
By the Hurewicz theorem $$\pi_{2n-1}(F_r) = H_{2n-1}(F_r)= B\otimes B\otimes \dots \otimes B = B^r$$
where $B$ denotes the abelian group $H_{s-1}(\Omega X) = H_s(X)$.
Here we have used the K\"unneth theorem for joins, see for instance \cite{Sch}, chapter 1, \S 5.
By Theorem 1 from \cite{Sch} the obstruction $\theta_r$ equals the $r$-fold cup-product
$$\theta_r = \theta\cup \dots \cup \theta = \theta^r$$
where $\theta\in H^s(X\times X; B)$ is the primary obstruction to the existence of a section of $\pi: PX \to X\times X$.
Writing $\theta = v\times 1 + 1\times w$ and observing that $\theta|\Delta_X=0$ (since there is a continuous section of (\ref{fibration}) over the diagonal
$\Delta_X\subset X\times X$) shows that $v+w=0$ and therefore $\theta = v\times 1 - 1\times v=\bar v$. Hence we have found a cohomology class $v\in H^s(X;B)$ with
 $\bar v^r\not=0$.

(c) In one direction the statement of (c) follows from the upper bound (a) and \cite{Fa03}, Thm. 7, i.e. the existence of classes $v_1, \dots, v_r \in H^s(X;\kk)$ with
$\bar v_1\cup \dots\cup \bar v_r \not=0$ combined with (a) gives $\tc(X)=r+1$.
Suppose now that $H_\ast(X)$ is square free. Write $B=H_s(X)$ as a direct sum $$B=\oplus_{i\in I} B_i$$ where each $B_i$ is either $\Z$ or a cyclic group of prime order $\Z_p$ and $I$ is an index set. The $r$-fold tensor power $B^r=B\otimes \dots \otimes B$ is a direct sum
$$B^r= \bigoplus_{(i_1, \dots, i_r)\in I^r}B_{i_1}\otimes B_{i_2} \otimes \dots \otimes B_{i_r}$$
and each tensor product $B_{i_1}\otimes B_{i_2} \otimes \dots \otimes B_{i_r}$ is either $\Z$, $\Z_p$ or trivial. As we know from the proof of (b) there is a class $v\in H^s(X;B)$ such that $\bar v^r\not=0\in H^{2n}(X\times X; B^r)$. For any index $i\in I$ denote by $v_i$ the image of $v$ under the coefficient projection $B\to B_i$.
Since $\bar v^r\not=0$ there exists a sequence $(i_1, \dots, i_r)\in I^r$ such that the product
$$\bar v_{i_1} \cup \dots \cup \bar v_{i_r}\in H^{2n}(X\times X; B_{i_1}\otimes B_{i_2} \otimes \dots \otimes B_{i_r}).$$
is nonzero. If the product $B_{i_1}\otimes B_{i_2} \otimes \dots \otimes B_{i_r}$ is $\Z_p$ then each $B_{i_j}$ is either $\Z$ or $\Z_p$ and
taking $\kk=\Z_p$ and reducing all these classes $v_{i_k}$ mod $p$ we obtain that (c) is satisfied. In the case
when the product $B_{i_1}\otimes B_{i_2} \otimes \dots \otimes B_{i_r}$ is infinite cyclic each of the groups $B_{i_k}$ is $\Z$ and the class
\begin{eqnarray}\label{prod}
\bar v_{i_1}\cup \dots \cup \bar v_{i_r}\not= 0 \in H^{2n}(X\times X; \Z)
\end{eqnarray}
is integral and nonzero.

Since the group $H^{2n}(X\times X; \Z)$ is square-free the cup-product (\ref{prod}) is indivisible by some prime $p$.
Indeed, the group $H^{2n}(X\times X; \Z)$ is direct sum of cyclic groups of prime order and infinite cyclic groups and the product (\ref{prod}) has a nontrivial component in at least one of these groups. A nonzero element of $\Z$ is divisible by finitely many primes and a nonzero element of $\Z_p$ is divisible by all primes except $p$.

Therefore, as follows from the exact sequence
 $$\dots \to H^{2n}(X\times X;\Z) \stackrel{p}\to H^{2n}(X\times X;\Z) \to H^{2n}(X\times X;\Z_p)\to \dots, $$
 the mod $p$ reduction of the product (\ref{prod}) is nonzero.
Now, taking $\kk=\Z_p$ and reducing the classes $v_{i_k}$ mod $p$ gives a sequence of classes $w_{j_k}\in H^s(X;\kk)$ such that $\prod \bar w_{j_k}\not =0$ where
$k=1, \dots, r$.

(d) The proof of statement (d) of Theorem \ref{thm3} is similar to that of (c), with the simplification that all the groups $B_i$ are in this case infinite cyclic.
\end{proof}

\section{Proof of Theorem 1}\label{prf}

The cases $m=2$ and $m\geq 3$ odd of Theorem 1 were dealt with by Farber
and Yuzvinsky in \cite{FY04}. Their arguments also show that if $m\geq
4$ is
even, then $\TC(F(\R^m,n))$ equals
either $2n-1$ or $2n-2$. Hence to prove Theorem 1 it suffices to show
that $\TC(F(\R^m,n))\neq 2n-1$ when $m\geq 4$ is even.

Fix $n\geq 2$. For any $m\geq 2$ the space $F(\R^m,n)$ is $(m-2)$-connected, since it is the complement
of an arrangement of codimension $m$ subspaces of $\R^{mn}$. Its
integral cohomology ring is shown in \cite{FH} to be graded-commutative
algebra over $\Z$ on generators $$e_{ij}\in H^{m-1}(F(\R^m,n)), \quad 1\leq i<j\leq
n,$$ subject to the relations $$e_{ij}^2=0, \quad e_{ij}e_{ik}=(e_{ij}-e_{ik})e_{jk}$$ for
any triple $1\leq i<j<k\leq n$. In particular, $H^*(F(\R^m,n))$ is nonzero only in dimensions $i(m-1)$ where $i=0, 1, \dots, (n-1)$.
Applying the result of Eilenberg and Ganea \cite{EG} we obtain that for $m\ge 3$ the space $F(\R^m, n)$ is homotopy equivalent to a finite complex of dimension $\le (m-1)(n-1)$. Now we may apply statement (d) of Theorem \ref{thm3}, which gives, firstly, that $\tc(F(\R^m, n))\le 2n-1$ and, secondly,
$\tc(F(\R^m, n))= 2n-1$ if and only if there exist cohomology classes $$v_1, \dots,v_{2(n-1)}\in H^{m-1}(F(\R^m, n))$$ such that the product of the corresponding zero-divisors $$\bar v_1\cup \bar v_2\cup \dots \cup \bar v_{2(n-1)}$$  is nonzero; recall that the notation $\bar v$ is introduced in (\ref{bar}).
We show below that such classes $v_1, \dots, v_{2(n-1)}$ do not exists if $m\ge 4$ is even.

We recall the result of \cite{FY04} stating that $\tc(F(\C, n))=2n-2$. It is shown in the proof of Theorem 6 in \cite{FY04},
that
$F(\C, n)$ is homotopy equivalent to the product
$X\times S^1$ where $X$ is a finite polyhedron of dimension $\le n-2$. This argument uses the algebraic structure of
$\C=\R^2$ and does not generalize to $F(\R^m,n)$ with $m>2$.
Using the product inequality (Theorem 11 in \cite{Fa03}) one obtains
\begin{eqnarray*}\tc(F(\C, n)) &\le& \tc(X) +\tc(S^1)-1\\
&\le& (2(n-2)+1)+2-1=2n-2.\end{eqnarray*}
Hence there exist no $2(n-1)$ cohomology classes $v_1, \dots, v_{2(n-1)}\in H^1(F(\C, n))$ such that the product of the zero-divisors
$\bar v_1 \cup \dots \cup \bar v_{2(n-1)}$ is nonzero,
as this would contradict Theorem 7 from \cite{Fa03}.

Now we observe that for any even $m\geq 2$ there is an algebra isomorphism
\begin{eqnarray}
\phi: H^*(F(\C;n)) \to H^{*(m-1)}(F(\R^m,n))
\end{eqnarray}
mapping classes of degree $i$ to classes of degree $(m-1)i$ where $i=0, 1, \dots, n-1$, see \cite{FH}. Thus we conclude that there exist no cohomology classes
$w_1, \dots, w_{2(n-1)}\in H^{m-1}(F(\R^m, n))$ such that the product of the corresponding zero-divisors
$\bar w_1 \cup \dots \cup \bar w_{2(n-1)}$ is nonzero. Theorem \ref{thm3} (statement (d)) gives now that $\tc(F(\R^m,n))\le 2(n-1)$.

On the other hand, it is proven in \cite{FY04} that one may find $2n-3$ cohomology classes $v_1, \dots, v_{2n-3}\in H^1(F(\C,n))$ such that the cup-product
$\bar v_1 \cup \dots\cup \bar v_{2n-3}$ is nonzero. Hence, repeating the above argument we see that for $m$ even there exists classes
$w_1, \dots, w_{2n-3}\in H^{m-1}(F(\R^m,n))$ (where $w_i=\phi(v_i)$) with nonzero product $\bar w_1 \cup \dots \cup \bar w_{2n-3}$; this gives the opposite inequality
$\tc(F(\R^m, n)) \ge 2n-2$.

Hence, $\tc(F(\R^m, n))=2n-2$ as stated.
\qed

\end{document}